\documentclass[12pt,reqno]{article}

\marginparwidth 0pt   \marginparsep 0pt
\oddsidemargin 0.1in
\evensidemargin 0.1in
\topmargin -0.6in
\textwidth 6.3in
\textheight 9.1in

\usepackage{amsmath,amsthm,amsfonts,amssymb,latexsym,mathrsfs,color,epic}
\usepackage[colorlinks=true,
linkcolor=webgreen,
filecolor=webbrown,
citecolor=webgreen]{hyperref}

\definecolor{webgreen}{rgb}{0,.5,0}
\definecolor{webbrown}{rgb}{.6,0,0}

\newtheorem{thm}{Theorem}[section]

\newtheorem{prop}[thm]{Proposition}

\numberwithin{equation}{section}

\newcommand{\cs}{{\mathfrak{S}}}

\def \des{{\rm des}\,}
\def \odes{{\rm odes}\,}
\def \edes{{\rm edes}\,}
\def \oasc{{\rm oasc}\,}
\def \easc{{\rm easc}\,}

\def \Odes{{\rm Odes}\,}
\def \Edes{{\rm Edes}\,}
\def \Oasc{{\rm Oasc}\,}
\def \Easc{{\rm Easc}\,}

\def \Orb{{\rm Orb}\,}

\newcommand{\lrf}[1]{\left\lfloor #1\right\rfloor}

\linespread{1.1}

\title{A new class of refined Eulerian polynomials
\thanks{Partially supported by the National Natural Science Foundation of China (Grant No.11526044) and
the Doctoral Scientific Research Starting Foundation of Liaoning Province(No.20170520451).
\newline\hspace*{5mm}
   {\it Email address:}\quad  sunhua@dlou.edu.cn}}
\author{Hua Sun}
\date{\footnotesize College of Sciences, Dalian Ocean University, Dalian 116023, P.R. China}
\begin{document}

\maketitle

\begin{abstract}
In this note we introduce a new class of refined Eulerian polynomials defined by
$$A_n(p,q)=\sum_{\pi\in\cs_{n}}p^{\odes(\pi)}q^{\edes(\pi)},$$
where $\odes(\pi)$ and $\edes(\pi)$ enumerate the number of descents of permutation $\pi$ in odd and even positions, respectively.
We show that the refined Eulerian polynomials $A_{2k+1}(p,q),k=0,1,2,\ldots,$ and $(1+q)A_{2k}(p,q),k=1,2,\ldots,$ have a nice symmetry property.
\end{abstract}

Key words:\quad  odd descents; even descents; Eulerian polynomials; $\gamma$-positivity

Mathematics Subject Classifications:\quad 05A05, 05A15, 05A19

\section{Introduction}\hspace*{\parindent}
Let $f(q)=a_{r}q^r+\cdots+a_{s}q^s(r\leq s)$, with $a_{r}\neq 0$ and $a_{s}\neq 0$, be a real polynomial.
The polynomial $f(q)$ is {\it palindromic} if $a_{r+i}=a_{s-i}$ for any $i$.
Following Zeilberger~\cite{Zei88}, define the {\it darga} of $f(q)$ to be $r+s$.
The set of all palindromic polynomials of darga $n$ is a vector space~\cite{SWZ15} with gamma basis
$$\Gamma_{n}:=\{q^{i}(1+q)^{n-2i}|0\leq i\leq \lrf{n/2}\}.$$

Let $f(p,q)$ be a nonzero bivariate polynomial. The polynomial $f(p,q)$ is
{\it palindromic of darga $n$} if it satisfies the following two equations
\begin{gather*}
f(p,q)=f(q,p),\\
f(p,q)=(pq)^{n}f(1/p,1/q).
\end{gather*}
See~\cite{ABERS17} for details.
It is known~\cite{Lin16} that the set of all palindromic bivariate polynomials of darga $n$ is a vector space with gamma basis
$$\mathcal{B}_{n}:=\{(pq)^{i}(p+q)^{j}(1+pq)^{n-2i-j}|i,j\geq0,2i+j\leq n\}.$$

Let $\cs_{n}$ denote the set of all permutations of the set $[n]:=\{1,2,\ldots,n\}$. For a permutation
$\pi=\pi_{1}\pi_{2}\cdots\pi_{n}\in\cs_n$, an index $i\in[n-1]$ is a {\it descent} of $\pi$ if $\pi_i>\pi_{i+1}$,
and $\des(\pi)$ denotes the number of descents of $\pi$.
The classic Eulerian polynomial is defined as the generating
polynomial for the statistic des over the set $\cs_{n}$, i.e.,
$$A_{n}(q)=\sum_{\pi\in\cs_{n}}q^{\des(\pi)}.$$
Foata and Sch\"{u}tzenberger~\cite{FS70} proved that the Eulerian polynomial $A_{n}(q)$ can be expressed in terms of the gamma basis $\Gamma_{n}$
with nonnegative integer coefficients.
A polynomial with nonnegative coefficients under the gamma basis $\Gamma_{n}$
is palindromic and unimodal~\cite{Pet15}.

Ehrenborg and Readdy~\cite{ER16} studied the number of ascents in odd position on $0,1$-words. We define similar statistics on permutations. For a permutation $\pi\in\cs_{n}$,
an index $i\in[n-1]$ is an {\it odd descent} of $\pi$ if $\pi_i>\pi_{i+1}$ and $i$ is odd,
an {\it even descent} of $\pi$ if $\pi_i>\pi_{i+1}$ and $i$ is even,
an {\it odd ascent} of $\pi$ if $\pi_i<\pi_{i+1}$ and $i$ is odd,
an {\it even ascent} of $\pi$ if $\pi_i<\pi_{i+1}$ and $i$ is even.
Let $\Odes(\pi)$, $\Edes(\pi)$, $\Oasc(\pi)$ and $\Easc(\pi)$
denote the set of all odd descents, even descents, odd ascents and even ascents of $\pi$, respectively.
The corresponding cardinalities are $\odes(\pi)$, $\edes(\pi)$, $\oasc(\pi)$ and $\easc(\pi)$, respectively.
Note that we can also define the above four statistics on words of length $n$.
The joint distribution of odd and even descents on $\cs_n$ is denoted by $A_n(p,q)$, i.e.,
$$A_n(p,q)=\sum_{\pi\in\cs_{n}}p^{\odes(\pi)}q^{\edes(\pi)}.$$

The polynomial $A_n(p,q)$ is a bivariate polynomial of degree $n-1$. The monomial with degree $n-1$ is
$p^{\lrf{n/2}}q^{\lrf{(n-1)/2}}$ only. If $p=q$, then $A_n(q,q)=A_n(q)$ is the classic Eulerian polynomial.
Thus $A_n(p,q)$, $n=1,2,\ldots,$ can be seen as a class of refined Eulerian polynomials. For example, we have
\begin{align*}
A_1(p,q)&=1,\\
A_2(p,q)&=1+p,\\
A_3(p,q)&=1+2p+2q+pq,\\
A_4(p,q)&=1+6p+5q+5p^2+6pq+p^{2}q,\\
A_5(p,q)&=1+13p+13q+16p^2+34pq+16q^2+13p^{2}q+13pq^{2}+p^{2}q^{2},\\
A_6(p,q)&=1+29p+28q+89p^2+152pq+61q^2+61p^{3}+152p^{2}q\\
&~~~+89pq^{2}+28p^{3}q+29p^{2}q^{2}+p^{3}q^{2}.
\end{align*}

For convenience, we denote
\begin{equation*}
\widetilde{A}_n(p,q)=
\begin{cases}
A_n(p,q) & \text{if $n=2k+1$,}\\
(1+q)A_n(p,q) & \text{if $n=2k$.}
\end{cases}
\end{equation*}
Our main result is the following
\begin{thm}\label{pal}
For any $n=1,2,\ldots,$ the polynomial $\widetilde{A}_{n}(p,q)$ is palindromic of darga $\lrf{\frac{n}{2}}$.
\end{thm}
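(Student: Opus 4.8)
We verify the two defining relations of ``palindromic of darga $m$'' (with $m=\lrf{n/2}$) separately: the symmetry $\widetilde A_n(p,q)=\widetilde A_n(q,p)$, and the reciprocity $\widetilde A_n(p,q)=(pq)^m\widetilde A_n(1/p,1/q)$. Both are to be read off from involutions on $\cs_n$ whose effect on the descent set is transparent. Throughout one uses that, among the positions $1,2,\dots,n-1$, there are $m=\lrf{n/2}$ odd ones and $\ell=\lrf{(n-1)/2}$ even ones, so $\odes\pi\le m$ and $\edes\pi\le\ell$ for every $\pi\in\cs_n$.

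For the reciprocity, I would use the \emph{complement} $\bar\pi$ defined by $\bar\pi_i=n+1-\pi_i$. It is an involution on $\cs_n$ with $\Des\bar\pi=[n-1]\setminus\Des\pi$, hence $\odes\bar\pi=m-\odes\pi$ and $\edes\bar\pi=\ell-\edes\pi$. Re-summing $p^{\odes}q^{\edes}$ over $\cs_n$ after applying $\pi\mapsto\bar\pi$ gives $A_n(p,q)=p^{m}q^{\ell}A_n(1/p,1/q)$. For odd $n$ (where $\ell=m$) this is exactly $A_n(p,q)=(pq)^mA_n(1/p,1/q)$; for $n=2k$ (where $\ell=k-1$) it reads $qA_{2k}(p,q)=(pq)^kA_{2k}(1/p,1/q)$, and substituting this into $(pq)^k\widetilde A_{2k}(1/p,1/q)=(pq)^k\frac{q+1}{q}A_{2k}(1/p,1/q)$ yields $(pq)^k\widetilde A_{2k}(1/p,1/q)=(1+q)A_{2k}(p,q)=\widetilde A_{2k}(p,q)$. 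So the reciprocity holds in all cases. For the symmetry when $n$ is odd, I would use the \emph{reverse--complement} $\overline{\pi^{r}}$, where $\pi^{r}_i=\pi_{n+1-i}$: it is an involution on $\cs_n$ with $\Des(\overline{\pi^{r}})=\{\,n-d:d\in\Des\pi\,\}$, and since $n$ is odd the map $d\mapsto n-d$ exchanges parities, so $\odes(\overline{\pi^{r}})=\edes\pi$ and $\edes(\overline{\pi^{r}})=\odes\pi$; hence $A_n(p,q)=A_n(q,p)$ for odd $n$.

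The one remaining assertion---symmetry of $\widetilde A_{2k}(p,q)=(1+q)A_{2k}(p,q)$---is the heart of the matter, since for $n=2k$ the reverse--complement \emph{preserves} rather than exchanges $\odes$ and $\edes$, and no composition of complement, reversal and reverse--complement produces the needed interchange of $p$ and $q$. My plan is to prove the sharper statement that $(1+p)$ divides $A_{2k}(p,q)$ and that the quotient $B_k(p,q):=A_{2k}(p,q)/(1+p)$ is symmetric; then $\widetilde A_{2k}(p,q)=(1+p)(1+q)B_k(p,q)$ is visibly symmetric. To get there I would set up a recursion in $k$ by deleting the last letter: grouping $\pi\in\cs_{2k}$ by the value $\pi_{2k}$ and standardising $\pi_1\cdots\pi_{2k-1}$ to some $\sigma\in\cs_{2k-1}$ (the new last position $2k-1$ being odd) gives
\[
A_{2k}(p,q)=\sum_{\sigma\in\cs_{2k-1}}p^{\odes\sigma}q^{\edes\sigma}\bigl[(2k-\sigma_{2k-1})+\sigma_{2k-1}\,p\bigr].
\]
One then feeds in the already-established symmetry of $A_{2k-1}$, together with the complement and reverse--complement involutions of $\cs_{2k-1}$, to control the last-letter-weighted sums on the right. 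The vanishing $A_{2k}(-1,q)=0$ (equivalently, $(1+p)\mid A_{2k}$) should fall out of a sign-reversing involution on $\{\pi\in\cs_{2k}:\edes\pi=e\}$, for each fixed $e$, that flips the parity of $\odes\pi$ while fixing $\edes\pi$; tracking that involution (or iterating the recursion) then gives the symmetry of $B_k$.

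The step I expect to be the obstacle is exactly this even case: it is the only place where the ambient bijections do not do the job, so one is forced to carry finer data---namely the joint distribution of $(\odes,\edes)$ together with the value of the last letter, which is precisely what the recursion above transports. Once symmetry of $\widetilde A_{2k}$ is in hand, Theorem~\ref{pal} follows by combining it with the reciprocity proved above. Finally, I note that the data through $n=6$ suggest the stronger fact that $\widetilde A_n$ has nonnegative coordinates in the gamma basis $\mathcal B_{\lrf{n/2}}$; establishing this---via a parity-graded refinement of the modified Foata--Strehl (valley-hopping) action, so that each orbit contributes a single basis element of $\mathcal B_{\lrf{n/2}}$---would yield a uniform proof of palindromicity handling all $n$ simultaneously.
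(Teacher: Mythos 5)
Your treatment of the reciprocity $\widetilde A_n(p,q)=(pq)^{\lrf{n/2}}\widetilde A_n(1/p,1/q)$ via the complement, and of the symmetry $A_{2k+1}(p,q)=A_{2k+1}(q,p)$ via the reverse--complement, is correct and coincides with the paper's argument. The problem is the even case, which you yourself flag as the obstacle: as written it is a plan, not a proof. You reduce the symmetry of $\widetilde A_{2k}$ to two unestablished claims --- that $(1+p)$ divides $A_{2k}(p,q)$ and that the quotient is symmetric in $p$ and $q$ --- and for these you offer only the hope that a sign-reversing involution on $\{\pi\in\cs_{2k}:\edes\pi=e\}$ flipping the parity of $\odes\pi$ ``should fall out.'' No such involution is exhibited, and the natural candidates fail: swapping $\pi_{2i-1}$ with $\pi_{2i}$ flips the descent at the odd position $2i-1$ but also disturbs the comparison at the adjacent even position, so $\edes$ is not preserved. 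Likewise your recursion $A_{2k}(p,q)=\sum_\sigma p^{\odes\sigma}q^{\edes\sigma}\bigl[(2k-\sigma_{2k-1})+\sigma_{2k-1}p\bigr]$ is correct, but the symmetry of $A_{2k-1}$ alone does not control the last-letter-weighted sum $\sum_\sigma \sigma_{2k-1}p^{\odes\sigma}q^{\edes\sigma}$, which is genuinely finer data. So the crux of the theorem is left unproved. (Your stronger claim does appear to hold numerically, e.g. $A_4=(1+p)(1+5p+5q+pq)$ and $A_6=(1+p)(1+28p+28q+61p^2+124pq+61q^2+28p^2q+28pq^2+p^2q^2)$, but that is evidence, not proof.)

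For comparison, the paper closes exactly this gap with a short bijection that sidesteps divisibility entirely. Write $(1+q)A_{2k}(p,q)=\sum_{\pi\in\cs_{2k}}p^{\odes(\pi)}[q^{\edes(\pi)}+q^{\edes(\pi)+1}]$ and interpret the two terms as the statistics of the length-$(2k+1)$ words $\pi(2k+1)$ and $\pi 0$: appending a maximal letter leaves $(\odes,\edes)$ unchanged, while appending $0$ adds one even descent at position $2k$. Applying the reverse--complement over the extended alphabet $\{0,1,\ldots,2k+1\}$ carries these to $0\pi^{rc}$ and $(2k+1)\pi^{rc}$; the words now have odd length, so positions change parity, and one reads off $(\odes,\edes)(0\sigma)=(\edes\sigma,\odes\sigma)$ and $(\odes,\edes)((2k+1)\sigma)=(\edes\sigma+1,\odes\sigma)$. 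Summing gives $\sum_{\pi}q^{\odes(\pi)}[p^{\edes(\pi)}+p^{\edes(\pi)+1}]$, i.e. $\widetilde A_{2k}(p,q)=\widetilde A_{2k}(q,p)$. If you want to pursue your route instead, the factorization $A_{2k}(p,q)=(1+p)B_k(p,q)$ with $B_k$ symmetric would be a strictly stronger and independently interesting statement, but it requires an actual construction, not an expectation.
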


In the next section we give a proof of Theorem~\ref{pal}.
In Section 3 we study the case $q=1$ and the case $p=1$,
the polynomials $A_n(p,1)$ and $A_n(1,q)$ are the generating functions for the statistics odes and edes over the set $\cs_n$, respectively.
In the last section, we propose a conjecture that $\widetilde{A}_{n}(p,q)$ can be expressed in terms of the gamma basis $\mathcal{B}_{\lrf{\frac{n}{2}}}$
with nonnegative integer coefficients.

\section{The proof of Theorem~\ref{pal}}\hspace*{\parindent}
Let $\pi=\pi_{1}\pi_{2}\cdots\pi_{n}\in\cs_n$, we define the {\it reversal} $\pi^r$ of $\pi$ to be
$$\pi^r:=\pi_{n}\pi_{n-1}\cdots\pi_{1},$$
the {\it complement} $\pi^c$ of $\pi$ to be
$$\pi^c:=(n+1-\pi_{1})(n+1-\pi_{2})\cdots(n+1-\pi_{n}),$$
and the {\it reversal-complement} $\pi^{rc}$ of $\pi$ to be
$$\pi^{rc}:=(\pi^{c})^r=(\pi^{r})^c.$$

If $i$ is a descent of $\pi$, then $i$ is an ascent of $\pi^c$ and if $i$ is an ascent of $\pi$, then $i$ is a descent of $\pi^c$.
In other words, $\odes(\pi)+\odes(\pi^c)=\lrf{\frac{n}{2}}$ and $\edes(\pi)+\edes(\pi^c)=\lrf{\frac{n-1}{2}}$.
Then
\begin{equation*}
\begin{split}
A_{n}(p,q)&=\sum_{\pi\in\cs_{n}}p^{\odes(\pi)}q^{\edes(\pi)}=\sum_{\pi\in\cs_{n}}p^{\lrf{\frac{n}{2}}-\odes(\pi^c)}q^{\lrf{\frac{n-1}{2}}-\edes(\pi^c)}\\
&=p^{\lrf{\frac{n}{2}}}q^{\lrf{\frac{n-1}{2}}}\sum_{\pi\in\cs_{n}}\left(\frac{1}{p}\right)^{\odes(\pi^c)}\left(\frac{1}{q}\right)^{\edes(\pi^c)}
=p^{\lrf{\frac{n}{2}}}q^{\lrf{\frac{n-1}{2}}}A_{n}\left(\frac{1}{p},\frac{1}{q}\right).
\end{split}
\end{equation*}
Specially, for any $k=1,2,\ldots$, we have $A_{2k}(p,q)=p^{k}q^{k-1}A_{2k}(1/p,1/q)$ and for any $k=0,1,2,\ldots$, we have $A_{2k+1}(p,q)=(pq)^{k}A_{2k+1}(1/p,1/q)$.

It can be derived that $i$ is a descent of $\pi$ if and only if $i$ is an ascent of $\pi^c$.
It is also easy to see that $i$ is a descent of $\pi$ if and only if $n-i$ is an ascent of $\pi^r$. Then, given a permutation $\pi=\pi_{1}\pi_{2}\cdots\pi_{2k+1}\in\cs_{2k+1}$,
\begin{center}
$i$ is a descent of $\pi$ if and only if $2k+1-i$ is a descent of $\pi^{rc}$.
\end{center}
Specially, $i$ is an odd descent of $\pi$ if and only if $2k+1-i$ is an even descent of $\pi^{rc}$,
and $i$ is an even descent of $\pi$ if and only if $2k+1-i$ is an odd descent of $\pi^{rc}$. So we have
$$\widetilde{A}_{2k+1}(p,q)=\sum_{\pi\in\cs_{2k+1}}p^{\odes(\pi)}q^{\edes(\pi)}=\sum_{\pi\in\cs_{2k+1}}p^{\edes(\pi^{rc})}q^{\odes(\pi^{rc})}=\widetilde{A}_{2k+1}(q,p).$$
Thus for any $k=1,2,\ldots,$ the polynomial $\widetilde{A}_{2k+1}(p,q)$ is palindromic of darga $k$.

In addition,
$$\widetilde{A}_{2k}(p,q)=(1+q)p^{k}q^{k-1}A_{2k}\left(\frac{1}{p},\frac{1}{q}\right)=\left(1+\frac{1}{q}\right)p^{k}q^{k}A_{2k}\left(\frac{1}{p},\frac{1}{q}\right)
=(pq)^{k}\widetilde{A}_{2k+1}\left(\frac{1}{p},\frac{1}{q}\right).$$

The last part is to prove that $\widetilde{A}_{2k}(p,q)=\widetilde{A}_{2k}(q,p)$, that is,
$$\sum_{\pi\in\cs_{2k}}p^{\odes(\pi)}[q^{\edes(\pi)}+q^{\edes(\pi)+1}]=\sum_{\pi\in\cs_{2k}}q^{\odes(\pi)}[p^{\edes(\pi)}+p^{\edes(\pi)+1}].$$

Let $\cs'_{2k}=\left\{\pi(2k+1),\pi0|\pi\in\cs_{2k}\right\}$, $\cs''_{2k}=\left\{(2k+1)\pi,0\pi|\pi\in\cs_{2k}\right\}$,
and let $\pi=\pi_{1}\pi_{2}\cdots\pi_{2k}\in\cs_{2k}$. Define a map $\psi:\cs'_{2k}\rightarrow\cs''_{2k}$ by
\begin{equation*}
\psi(\pi x)=
\begin{cases}
(2k+1)(2k+1-\pi_{2k})(2k+1-\pi_{2k-1})\cdots(2k+1-\pi_{1}) & \text{if $x=0$,}\\
0(2k+1-\pi_{2k})(2k+1-\pi_{2k-1})\cdots(2k+1-\pi_{1}) & \text{if $x=2k+1$.}
\end{cases}
\end{equation*}

Given a permutation $\pi\in\cs_{2k}$, it is no hard to see that
\begin{align*}
&\odes(\pi(2k+1))=\odes(\pi),&\edes(\pi(2k+1))=\edes(\pi),\\
&\odes(\pi0)=\odes(\pi),&\edes(\pi0)=\edes(\pi)+1,\\
&\odes((2k+1)\pi)=\edes(\pi)+1,&\edes((2k+1)\pi)=\odes(\pi),\\
&\odes(0\pi)=\edes(\pi),&\edes(0\pi)=\odes(\pi).
\end{align*}
Thus
\begin{align*}
&\odes(\psi(\pi(2k+1)))=\odes(0\pi^{rc})=\edes(\pi^{rc}),\\
&\edes(\psi(\pi(2k+1)))=\edes(0\pi^{rc})=\odes(\pi^{rc}),\\
&\odes(\psi(\pi0))=\odes((2k+1)\pi^{rc})=\edes(\pi^{rc})+1,\\
&\edes(\psi(\pi0))=\edes((2k+1)\pi^{rc})=\odes(\pi^{rc}).
\end{align*}
Obviously, the map $\psi$ is an involution. Then
\begin{equation*}
\begin{split}
&\sum_{\pi\in\cs_{2k}}p^{\odes(\pi)}[q^{\edes(\pi)}+q^{\edes(\pi)+1}]\\
&=\sum_{\pi\in\cs_{2k}}p^{\odes(\pi(2k+1))}q^{\edes(\pi(2k+1))}+\sum_{\pi\in\cs_{2k}}p^{\odes(\pi0)}q^{\edes(\pi0)}\\
&=\sum_{\pi\in\cs_{2k}}p^{\odes(\psi(\pi(2k+1)))}q^{\edes(\psi(\pi(2k+1)))}+\sum_{\pi\in\cs_{2k}}p^{\odes(\psi(\pi0))}q^{\edes(\psi(\pi0))}\\
&=\sum_{\pi\in\cs_{2k}}p^{\edes(\pi^{rc})}q^{\odes(\pi^{rc})}+\sum_{\pi\in\cs_{2k}}p^{\edes(\pi^{rc})+1}q^{\odes(\pi^{rc})}\\
&=\sum_{\pi\in\cs_{2k}}q^{\odes(\pi)}[p^{\edes(\pi)}+p^{\edes(\pi)+1}].
\end{split}
\end{equation*}
Thus for any $k=1,2,\ldots,$ the polynomial $\widetilde{A}_{2k}(p,q)$ is palindromic of darga $k$.
This completes the proof.

\section{The case $p=1$ and the case $q=1$}\hspace*{\parindent}
If $q=1$, the polynomial $A_n(p,1)$ is the generating function for the statistic odes over the set $\cs_n$,
and if $p=1$, the polynomial $A_n(1,q)$ is the generating function for the statistic edes  over the set $\cs_n$.
More precisely, we have
\begin{prop}
Let $n$ be a positive integer. Then
\begin{equation}\label{equ1}
\sum_{\pi\in\cs_{n}}p^{\odes(\pi)}=A_n(p,1)=\frac{n!}{2^{\lrf{\frac{n}{2}}}}(1+p)^{\lrf{\frac{n}{2}}},
\end{equation}
and
\begin{equation}\label{equ2}
\sum_{\pi\in\cs_{n}}q^{\edes(\pi)}=A_n(1,q)=\frac{n!}{2^{\lrf{\frac{n-1}{2}}}}(1+q)^{\lrf{\frac{n-1}{2}}}.
\end{equation}
\end{prop}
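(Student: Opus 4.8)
The plan is to exploit the observation that the odd descent positions inside $\{1,2,\ldots,n-1\}$ are exactly $1,3,5,\ldots,2\lrf{\frac{n}{2}}-1$, so the blocks of positions $P_j:=\{2j-1,2j\}$, $j=1,2,\ldots,\lrf{\frac{n}{2}}$, are pairwise disjoint, and whether position $2j-1$ is a descent of a permutation depends only on the relative order of its two entries in the block $P_j$. For $S\subseteq[\lrf{\frac{n}{2}}]$ let $\tau_S\colon\cs_n\to\cs_n$ be the map that, for every $j\in S$, transposes the entries sitting in positions $2j-1$ and $2j$. Because the blocks $P_j$ are disjoint, $\tau_S\circ\tau_T=\tau_{S\triangle T}$, and $\tau_S(\pi)=\pi$ forces $S=\emptyset$ (a transposed block always changes $\pi$, its two entries being distinct). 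Hence $\{\tau_S\}$ is a free action of $(\mathbb{Z}/2\mathbb{Z})^{\lrf{n/2}}$ on $\cs_n$, so every orbit has size $2^{\lrf{n/2}}$ and there are $n!/2^{\lrf{n/2}}$ orbits.

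First I would evaluate the contribution of a single orbit to $A_n(p,1)$. Fix $\pi\in\cs_n$; its orbit is the set of $2^{\lrf{n/2}}$ distinct permutations $\tau_S(\pi)$, $S\subseteq[\lrf{\frac n2}]$. For each $j$, position $2j-1$ is a descent of $\tau_S(\pi)$ exactly when the membership of $j$ in $S$ differs from whether $2j-1$ is already a descent of $\pi$; in particular, of the two options ``$j\in S$'' and ``$j\notin S$'' precisely one produces a descent at $2j-1$. Since the statuses at distinct $j$ are governed by disjoint blocks of positions, $\odes(\tau_S(\pi))$ is a sum of $\lrf{\frac n2}$ indicator terms, the $j$-th depending only on whether $j\in S$, and therefore
$$\sum_{S\subseteq[\lrf{n/2}]}p^{\odes(\tau_S(\pi))}=\prod_{j=1}^{\lrf{n/2}}\bigl(1+p\bigr)=(1+p)^{\lrf{n/2}}.$$
Summing over the $n!/2^{\lrf{n/2}}$ orbits gives \eqref{equ1}.

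For \eqref{equ2} I would rerun the argument verbatim with the disjoint blocks $Q_j:=\{2j,2j+1\}$, $j=1,2,\ldots,\lrf{\frac{n-1}{2}}$, whose left endpoints $2,4,\ldots$ are precisely the even descent positions in $\{1,\ldots,n-1\}$: the resulting orbits have size $2^{\lrf{(n-1)/2}}$ and each contributes $(1+q)^{\lrf{(n-1)/2}}$. (Alternatively one can derive \eqref{equ2} from \eqref{equ1} together with the symmetry $\widetilde{A}_n(p,q)=\widetilde{A}_n(q,p)$ of Theorem~\ref{pal}: for odd $n$ it is the same identity, while for $n=2k$ setting $p=1$ in $(1+q)A_{2k}(p,q)=(1+p)A_{2k}(q,p)$ gives $(1+q)A_{2k}(1,q)=2A_{2k}(q,1)=\frac{(2k)!}{2^{k-1}}(1+q)^{k}$.) The only point that needs care is the parity bookkeeping---verifying that the chosen disjoint blocks exhaust all the odd (resp.\ even) descent positions and that $\lrf{\frac n2}$ (resp.\ $\lrf{\frac{n-1}{2}}$) counts them---after which the factorization of the per-orbit sum, hence the whole computation, is routine; I do not expect a real obstacle here.
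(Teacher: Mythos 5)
Your proposal is correct and is essentially the paper's own argument: the author also acts on $\cs_n$ by the group $\mathbb{Z}_2^{\lrf{n/2}}$ of swaps of the entries in positions $2j-1,2j$ (resp.\ $2j,2j+1$), shows each orbit contributes $(1+p)^{\lrf{n/2}}$ (resp.\ $(1+q)^{\lrf{(n-1)/2}}$), and counts orbits via canonical representatives with all blocks ascending, which is equivalent to your freeness argument. The only cosmetic difference is that you factor the per-orbit sum directly, while the paper writes $\odes(\varphi_S(\hat\pi))=|S|$ for the canonical representative $\hat\pi$.
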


\begin{proof}
It is easy to verify that the equalities~\ref{equ1} and~\ref{equ1} are true for $n=1$ and $n=2$.
Let $n\geq3$ and let $\pi=\pi_{1}\pi_{2}\cdots\pi_{n}\in\cs_n$. For any $i=1,2,\ldots,\lrf{n/2}$, define a map $\varphi_{i}:\cs_{n}\rightarrow\cs_{n}$ by
$$\varphi_{i}(\pi)=\pi_{1}\pi_{2}\cdots\pi_{2i}\pi_{2i-1}\cdots\pi_{n},$$
i.e., $\varphi_{i}(\pi)$ is obtained by swapping $\pi_{2i}$ with $\pi_{2i-1}$ in $\pi$.
Obviously, the map $\varphi_{i}$ is an involution, $i=1,2,\ldots,\lrf{n/2}$, and $\varphi_{i}$ and $\varphi_{j}$ commute for all $i,j\in\{1,2,\ldots,\lrf{n/2}\}$.
For any subset $S\subseteq \{1,2,\ldots,\lrf{n/2}\}$, we define a map $\varphi_{S}:\cs_{n}\rightarrow\cs_{n}$ by
$$\varphi_{S}(\pi)=\prod_{i\in S}\varphi_{i}(\pi).$$

The group $\mathbb{Z}_{2}^{\lrf{n/2}}$ acts on $\cs_{n}$ via the maps $\varphi_{S},S\subseteq \{1,2,\ldots,\lrf{n/2}\}$.
For any $\pi\in\cs_n$, let $\Orb^*(\pi)$ denote the orbit including $\pi$ under the group action. There is a unique permutation in $\Orb^*(\pi)$,
denoted by $\hat{\pi}$, such that
$$\hat{\pi}_{1}<\hat{\pi}_{2},~ \hat{\pi}_{3}<\hat{\pi}_{4},~ \ldots,~ \hat{\pi}_{2\lrf{n/2}-1}<\hat{\pi}_{2\lrf{n/2}}.$$
It is not hard to prove that $\odes(\hat{\pi})=0$ and $\odes(\varphi_{S}(\hat{\pi}))=|S|$ for any $S\subseteq \{1,2,\ldots,\lrf{n/2}\}$. Then
$$\sum_{\sigma\in \Orb^*(\pi)}p^{\odes(\sigma)}=(1+p)^{\lrf{\frac{n}{2}}}.$$

Let $\cs^*_{n}$ consist of all the permutations in $\cs_{n}$ such that
$$\pi_{1}<\pi_{2},~ \pi_{3}<\pi_{4},~ \ldots,~ \pi_{2\lrf{n/2}-1}<\pi_{2\lrf{n/2}}.$$
The cardinality of the set $\cs^*_{n}$ is
$$\binom{n}{2}\binom{n-2}{2}\cdots\binom{n+2-2\lrf{\frac{n}{2}}}{2}=\frac{n!}{2^{\lrf{\frac{n}{2}}}}.$$
Then
$$\sum_{\pi\in\cs_{n}}p^{\odes{(\pi)}}=A_n(p,1)=\frac{n!}{2^{\lrf{\frac{n}{2}}}}(1+p)^{\lrf{\frac{n}{2}}}.$$

Similarly, for any $i=1,2,\ldots,\lrf{(n-1)/2}$, we define a map $\phi_{i}:\cs_{n}\rightarrow\cs_{n}$ by
$$\phi_{i}(\pi)=\pi_{1}\cdots\pi_{2i+1}\pi_{2i}\cdots\pi_{n},$$
i.e., $\phi_{i}(\pi)$ is obtained by swapping $\pi_{2i}$ with $\pi_{2i+1}$ in $\pi$.
Obviously, the map $\phi_{i}$ is an involution, $i=1,2,\ldots,\lrf{(n-1)/2}$, and $\phi_{i}$ and $\phi_{j}$ commute for all $i,j\in\{1,2,\ldots,\lrf{(n-1)/2}\}$.
For any subset $S\subseteq \{1,2,\ldots,\lrf{(n-1)/2}\}$, we define a map $\phi_{S}:\cs_{n}\rightarrow\cs_{n}$ by
$$\phi_{S}(\pi)=\prod_{i\in S}\phi_{i}(\pi).$$

The group $\mathbb{Z}_{2}^{\lrf{(n-1)/2}}$ acts on $\cs_{n}$ via the maps $\phi_{S},S\in [\lrf{(n-1)/2}]$.
For any $\pi\in\cs_n$, let $\Orb^{**}(\pi)$ denote the orbit including $\pi$ under the group action. There is a unique permutation in $\Orb^{**}(\pi)$,
denoted by $\bar{\pi}$, such that
$$\bar{\pi}_{2}<\bar{\pi}_{3},~ \bar{\pi}_{4}<\bar{\pi}_{5},~ \ldots,~ \bar{\pi}_{2\lrf{(n-1)/2}}<\bar{\pi}_{2\lrf{(n-1)/2}+1}.$$
It is easily obtained that $\edes(\bar{\pi})=0$ and $\edes(\phi_{S}(\bar{\pi}))=|S|$ for any $S\subseteq \{1,2,\ldots,\\
\lrf{(n-1)/2}\}$. Then
$$\sum_{\sigma\in \Orb^{**}(\pi)}q^{\edes(\sigma)}=(1+q)^{\lrf{\frac{n-1}{2}}}.$$

Let $\cs^{**}_{n}$ consist of all the permutations in $\cs_{n}$ such that
$$\pi_{2}<\pi_{3},~ \pi_{4}<\pi_{5},~ \ldots,~ \pi_{2\lrf{(n-1)/2}}<\pi_{2\lrf{(n-1)/2}+1}.$$
The cardinality of the set $\cs^{**}_{n}$ is
\begin{equation*}
\begin{cases}
\binom{n}{2}\binom{n-2}{2}\cdots\binom{n+2-2\lrf{\frac{n-1}{2}}}{2}=\frac{n!}{2^{\lrf{\frac{n-1}{2}}}} & \text{if $n$ is odd,}\\
2\binom{n}{2}\binom{n-2}{2}\cdots\binom{n+2-2\lrf{\frac{n-1}{2}}}{2}=\frac{n!}{2^{\lrf{\frac{n-1}{2}}}} & \text{if $n$ is even.}
\end{cases}
\end{equation*}
Then
$$\sum_{\pi\in\cs_{n}}q^{\edes(\pi)}=A_n(1,q)=\frac{n!}{2^{\lrf{\frac{n-1}{2}}}}(1+q)^{\lrf{\frac{n-1}{2}}}.$$
\end{proof}

\section{Remarks}\hspace*{\parindent}
The set of palindromic bivariate polynomials of darga $k$ is a vector space with gamma basis
$$\mathcal{B}_{k}=\{(pq)^{i}(p+q)^{j}(1+pq)^{k-2i-j}|i,j\geq0,2i+j\leq k\}.$$
Thus the refined Eulerian polynomials $\widetilde{A}_n(p,q)$, $n=1,2,\ldots,$
can be expanded in terms of the gamma basis $\mathcal{B}_{\lrf{\frac{n}{2}}}$. For example,
\begin{align*}
\widetilde{A}_{1}(p,q)&=A_{1}(p,q)=1,\\
\widetilde{A}_{2}(p,q)&=(1+q)A_{2}(p,q)=(1+q)(1+p)=1+p+q+pq\\
&=(1+pq)+(p+q),\\
\widetilde{A}_3(p,q)&=A_{3}(p,q)=1+2p+2q+pq=(1+pq)+2(p+q),\\
\widetilde{A}_{4}(p,q)&=(1+q)A_{4}(p,q)=(1+q)(1+6p+5q+5p^2+6pq+p^{2}q)\\
&=1+6p+6q+5p^2+12pq+5q^2+6p^{2}q+6pq^{2}+p^{2}q^{2}\\
&=(1+pq)^2+6(p+q)(1+pq)+5(p+q)^2,\\
\widetilde{A}_{5}(p,q)&=A_{5}(p,q)=1+13p+13q+16p^2+34pq+16q^2+13p^{2}q+13pq^{2}+p^{2}q^{2}\\
&=(1+pq)^2+13(p+q)(1+pq)+16(p+q)^2,\\
\widetilde{A}_{6}(p,q)&=(1+q)A_{6}(p,q)\\
&=(1+q)(1+29p+28q+89p^2+152pq+61q^2\\
&~~~+61p^{3}+152p^{2}q+89pq^{2}+28p^{3}q+29p^{2}q^{2}+p^{3}q^{2})\\
&=1+29p+29q+89p^2+89q^2+181pq+61p^3+241p^{2}q\\
&~~~+241pq^{2}+61q^3+181p^{2}q^{2}+89p^{3}q+89pq^{3}+29p^{3}q^{2}+29p^{2}q^{3}+p^{3}q^{3}\\
&=(1+pq)^3+29(p+q)(1+pq)^2+89(p+q)^2(1+pq)+61(p+q)^3.
\end{align*}

We conjecture that for any $n\geq 1$, all $c_{j}$ are positive integers in the following expansion
$$\widetilde{A}_n(p,q)=\sum_{j=0}^{\lrf{\frac{n}{2}}}c_{j}(p+q)^{j}(1+pq)^{\lrf{\frac{n}{2}}-j}.$$

\section*{Acknowledgment}\hspace*{\parindent}
I am grateful to my advisor Prof. Yi Wang for his valuable comments and suggestions. I also would like to thank the referee for his/her careful reading and many helpful suggestions.

\end{document}